\documentclass{article}

\usepackage[centertags]{amsmath}
\usepackage{hyperref}
\usepackage{amsfonts}
\usepackage{amssymb}
\usepackage{amsthm}
\usepackage{newlfont}
\usepackage{amscd}
\usepackage{amsmath,amscd}
\usepackage[all]{xy}
\usepackage{verbatim}

\usepackage{color}

\newcommand{\QQ}{\mathbb{Q}}
\newcommand{\ZZ}{\mathbb{Z}}

\newcommand{\PP}{\mathbb{P}}

\newcommand{\AAA}{\mathbb{A}}
\newcommand{\be}{\begin{equation}}
\newcommand{\ee}{\end{equation}}


\def\sig{{\sigma}}



\newtheorem{thm}{Theorem}[section]

\newtheorem{lem}[thm]{Lemma}

\theoremstyle{definition}
\newtheorem{defn}[thm]{Definition}
\theoremstyle{remark}



\DeclareMathOperator{\Ker}{Ker}

\DeclareMathOperator{\Pic}{Pic}

\DeclareMathOperator{\Div}{Div}

\DeclareMathOperator{\Br}{Br}

\DeclareMathOperator{\Hom}{Hom}



\DeclareFontEncoding{OT2}{}{} 
\DeclareTextFontCommand{\textcyr}{\fontencoding{OT2}\fontfamily{wncyr}\fontseries{m}\fontshape{n}\selectfont}

\newcommand{\Sha}{\textcyr{Sh}}


\title{  On the Brauer-Manin Obstruction Applied to Ramified Covers.}

\author{Tomer M. Schlank}
\begin{document}
\maketitle
\setlength{\baselineskip}{18pt}

\section*{Abstract}
The Brauer-Manin obstruction is used to explain the failure of the
local-global principle for algebraic varieties.
In 1999 Skorobogatov gave the first example of a variety whose failure to
 satisfy that principle is not explained  by the
Brauer-Manin obstruction.
He did so by applying the Brauer-Manin obstruction to \'{e}tale covers of the
variety, thus defining a finer obstruction. In 2008 Poonen gave
the first example of failure of the local-global principle which cannot be
explained by Skorobogatov's \'{e}tale-Brauer obstruction.
However, Poonen's construction was not accompanied by a definition of a new finer obstruction. In this paper we present
a possible definition for such an obstruction by applying the
Brauer-Manin obstruction to some ramified covers as well, and show that this new obstruction can
in some cases explain Poonen counterexample over a totally imaginary number field.

\tableofcontents

\section{Introduction}
Call a variety $X$ nice if it is smooth, projective, and geometrically integral. Given a nice variety $X$ over  a global field  $k$, a major problem is to decide whether $X(k)= \emptyset$.  As a first approximation one can consider the set $X(\AAA_k) \supset X(k)$, where $\AAA_k$ is the adeles ring of $k$. It is a classical theorem of Minkowski and Hasse that if $X$ is a quadric then $X(\AAA_k)\neq \emptyset \Rightarrow X(k)\neq \emptyset$. When a variety $X$ satisfies this property we say that it satisfies the Hasse (or the Local-Global) principle. In the 1940's Lind and Reichardt (~\cite{Lin40}, ~\cite{Rei42} ) gave examples of genus 1 curves that do not satisfy the Hasse principle. More counterexamples to the Hasse principle were given throughout the years, until in 1971 Manin ~\cite{Man70} described a general obstruction to the Hasse principle, that explained all the examples that were known to that date. The obstruction (known as the Brauer-Manin obstruction) is defined by considering a certain set $X(\AAA_k)^{\Br} $,  $X(k) \subset X(\AAA_k)^{\Br}  \subset X(\AAA_k)$. If $X$ is a counterexample to the Hasse principle we say that it is accounted for or explained by the Brauer-Manin obstruction if $\emptyset = X(\AAA_k)^{\Br}  \subset X(\AAA_k) \neq \emptyset$.
\\
In 1999 Skorobogatov ~\cite{Sko99} defined a refinement of the Brauer-Manin obstruction (also known as the \'{e}tale-Brauer-Manin obstruction) and used it to produce an example of a variety $X$ such that $X(\AAA_k)^{\Br}  \neq \emptyset$ but $X(k) = \emptyset$. Namely, he described a set  $X(k) \subset X(\AAA_k)^{\acute{E}t,\Br}  \subset X(\AAA_k)^{\Br}  \subset X(\AAA_k)$ and found a variety $X$ such that $X(\AAA_k)^{\acute{E}t,\Br}  = \emptyset$ but $X(\AAA_k)^{\Br}  \neq \emptyset$.

In his paper from 2008 ~\cite{Poo08} Poonen constructed the first and currently only known example of a variety $X$ such that $X(\AAA)^{\acute{E}t,\Br}  \neq \emptyset$ but $X(k)=\emptyset$. However, Poonen's method of showing that  $X(k)=\emptyset$ relies on the details of his specific construction and is not explained by a new finer obstruction.
Therefore, one wonders if Poonen's counterexample can be accounted for by an additional refinement of $X(\AAA_k)^{\acute{E}t,\Br} $. Namely, can one give a general definition of a set
$$ X(k) \subset X(\AAA_k)^{new} \subset X(\AAA_k)^{\acute{E}t,\Br} $$
such that Poonen's variety $X$ satisfies $X(\AAA_k)^{new}= \emptyset$. In this paper we suggest such a refinement.

The results presented in this paper hold for global fields without real embeddings, i.e for function fields and totaly imaginary number fields, but we believe that this restriction is not essential.

The author would like to thank Jean-Louis Colliot-Th\'{e}l\`{e}ne and Alexei Skorobogatov for many useful discussions.

Most of the work presented here was done while attending at the "Diophantine equations" trimester program at Hausdorff Institute in Bonn. The author would like to thank the staff of the institute for providing a pleasant atmosphere and excellent working conditions.

The author would also like to thank Yonatan Harpaz for his useful comments on the first draft of this paper.

\section{Ramified Covers and the Brauer-Manin Obstruction}\label{sec:ram}
In ~\cite{Sko99} Skorobogatov presented the \'{e}tale-Brauer-Manin obstruction. In this section we shall present a slight generalization which will be applicable to our case.
\subsection{Twisting torsors and the \'{e}tale-Brauer-Manin obstruction}
Let $k$ be a global field, $G$ be a finite $k$-group and $X$ be a $k$-variety.
Recall that a $G$-torsor over $X$ is a map $\pi:Y \to X$ a together with a $G$-action on $Y$ respecting $\pi$ such that over $\overline{k}$ the action on the fibers of $\pi$ is free and transitive.

Now let $\pi:Y \to X$  be a $G$-torsor and $\sigma \in H^1(K,G)$,   $\sigma$ can represented by a right $G$ principal homogenous space $P_\sigma$. We denote $Y^{\sigma}:= P_\sigma \times^G Y$, note
that there is a natural map $\pi^{\sigma}:Y^{\sigma} \to X$  and that $\pi^{\sigma}:Y^{\sigma} \to X$ is  naturally a $G^{\sigma}$-torsor over $X$ where $G^{\sigma}$ is the suitable inner form of $G$ . We call   $\pi^{\sigma}:Y^{\sigma} \to X$ the \emph{twist of $\pi:Y \to X$ by $\sigma$}.

One of the  main attributes of torsors who make them useful in the study of rational points is the fact that
given any $\pi:Y \to X$ a $G$-torsor.

We have:

$$(*),\quad  X(k) = \biguplus \limits_{\sigma \in H^1(k,G)} \pi^{\sigma}(Y^{\sigma}(k)) $$

The definition of the \'{e}tale-Brauer-Manin obstruction applying the Brauer-Manin obstruction to torsors of $X$.
Namely , since $Y(k) \subset Y(\AAA)^{Br}$  for every $Y$ we have by (*):

$$ X(k) \subset  X(\AAA)^{\pi,Br}  := \biguplus \limits_{\sigma \in H^1(k,G)} \pi^{\sigma}(Y^{\sigma}(\AAA)^{Br}).$$

By taking all possible such torsos over $X$  we get:

$$ X(\AAA)^{\acute{E}t,Br} = \bigcap \limits_{\pi} X(\AAA)^{\pi,Br} $$

\subsection{Brauer-Manin obstruction applied to ramified covers}
In this subsection we define slight generalizations of the concepts of torsors and the \'{e}tale-Brauer-Manin obstruction, which we use in order to get a "stornger" obstruction then the \'{e}tale-Brauer-Manin obstruction.
\begin{defn}
Let $X$ be a geometrically integral variety over a field $k$, $G$ a finite $k$-group and $D \subset X$ an effective divisor. A  \emph{$G$ -quasi-torsor over $X$ unramified outside $D$ } is a map $\pi:Y \to X$ and a $G$-action on $Y$ respecting $\pi$ such that
\begin{enumerate}
\item $\pi$ is a surjective quasi-finite morphism of generic degree $|G|$.
\item $G$ acts on the generic fibre freely and transitively.
\item The ramification locus of $\pi$ is contain in $D$.
\end{enumerate}
We call $d = |G|$ the degree of $Y$.
\end{defn}

Now let $D$ be a divisor and $\pi:Y \to X$ be a $G$-quasi-torsor over $X$ unramified outside $D$. Note that like in the case of a usual $G$-torsor, given an element  $\sig \in H^1(k,G)$ one can twist $\pi:Y \to X$ by $\sig$  and get a $G^{\sig}$-quasi-torsor $\pi^\sig: Y^\sig \to X$.
Now if we assume that  $D(k)= \emptyset$ in similar way to (*) we get:
$$(**),\quad  X(k) = \biguplus \limits_{\sigma \in H^1(k,G)} \pi^{\sigma}(Y^{\sigma}(k)) $$

By  $(**)$ we get:

$$X(k) \subset  X(\AAA)^{\pi,\Br}  := \biguplus \limits_{\sigma \in H^1(k,G)} \pi^{\sigma}(Y^{\sigma}(\AAA)^{\Br}).$$
By taking all possible such torsos over $X$  unramified outside $D$ we get:

$$ X(\AAA)^{\acute{e}t,\Br \thicksim D} = \bigcap \limits_{\pi} X(\AAA)^{\pi,\Br} \subset X(\AAA)$$

When $X(\AAA)^{\acute{E}t,\Br \thicksim D} = \emptyset$ we shall say that \emph{the absence of rational points is explained by the $(\acute{E}t,\Br \thicksim D)$-obstruction}

In this paper we shall show (under some conditions) that for the variety  $X$ that Poonen defines in ~\cite{Poo08}, one can choose a divisor $D \subset X$ such that $D(k)=\emptyset$ and
$X(\AAA)^{et-Br\thicksim D} = \emptyset$. This gives an obstruction theoretic explanation of the absence of rational points on $X$.

\section{Conic bundles}
In this section we shall present a construction of conic bundles on a nice variety $B$ and study some of its properties. This construction appears in ~\cite{Poo08} \S 4 and Poonen used it in order to build his counterexample.   We base out notation here on his, and add some notations of our own.

Trough out the rest of the paper given a $k$-variety $X$  the corresponding base-change to $\overline{k}$ where $\overline{k}$ is an algebraic closure of $k$.

Let $k$ be any field of characteristic not $2$. Let $B$ be a nice $k$-variety. Let $\mathcal{L}$ be
a line bundle on $B$. Let $\mathcal{E}$ be the rank $3$ bundle sheaf
$$ \mathcal{O} \oplus \mathcal{L} \oplus \mathcal{L} $$
on B. Let $a \in  k^\times$ and let $s \in  \Gamma(B,\mathcal{L}^{\otimes 2})$ be a nonzero global section. Consider the section
$$ 1 \oplus (-a) \oplus (-s) \in \Gamma(B,\mathcal{O} \oplus \mathcal{O} \oplus \mathcal{L}^{\otimes 2}) \subset
\Gamma (B, Sym^2 \mathcal{E}) $$
(where the inclusion $\mathcal{O} \oplus \mathcal{O} \oplus \mathcal{L}^{\otimes 2}\subset Sym^2 \mathcal{E})$ is the the diagonal one)
The zero locus of  $1 \oplus (-a) \oplus (-s)$ in $\PP \mathcal{E}^v$ is a projective geometrically integral scheme $X=X(B,\mathcal{L},a,s)$ with a morphism $\alpha : X \to  B$.

We shall call
$$ (\mathcal{L},s,a)\in Div B \times \Gamma(B,\mathcal{L}^{\otimes 2}) \times k^{\times} $$
a \emph{conic bundle datum on $B$} and $X$ \emph{the total space of $(\mathcal{L},s,a)$}. We denote $X=Tot_B(\mathcal{L},s,a)$.

If $U$ is a dense open subscheme of $B$ with a trivialization $\mathcal{L}|_U \cong \mathcal{O}_U$ and we identify $s|_U$ with an element of $\Gamma(U,\mathcal{O}_U)$ then the affine scheme defined by $y^2 - az^2 = s|_U$ in $\AAA^2_U$ is a dense open subscheme of $X$. We therefore refer to $X$ as the conic bundle given by $y^2 - az^2 = s$.

In the special case where $B = \PP^1$, $\mathcal{L} = \mathcal{O}(2)$, and the homogeneous form $s \in \Gamma(\PP^1,\mathcal{O}(4))$ is
separable, $X$ is called the Ch\^{a}telet surface given by $y^2-az^2 = s(x)$, where $s(x) \in  k[x]$ denotes
a dehomogenization of $s$.

Returning to the general case, we let $Z$ be the subscheme $s = 0$ of $B$. We call $Z$ the degeneracy
locus of the conic bundle $(\mathcal{L},s,a)$. Each fiber of $\alpha$ above a point of $B-Z$ is a smooth plane conic, and each fiber above a geometric point of $Z$ is a union of two projective lines crossing transversally
at a point. A local calculation shows that if $Z$ is smooth over $k$ then $X$ is smooth over $k$.

\begin{lem} The generic fiber $\overline{X}_\eta$ of $\overline{X} \to  \overline{B}$ is isomorphic to $\PP^1_{\kappa(\overline{B})}$ where $\kappa(\overline{B})$ is the field of rational functions on $\overline{B}$ .
\end{lem}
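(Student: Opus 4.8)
The plan is to show that the generic fibre $\overline{X}_\eta$ is a smooth plane conic over $\kappa(\overline{B})$ that has a rational point, and then invoke the classical fact that a smooth conic with a rational point is isomorphic to $\PP^1$. First I would work over the generic point: the fibre $\overline{X}_\eta$ is cut out inside $\PP(\overline{\mathcal{E}}^\vee)_\eta \cong \PP^2_{\kappa(\overline{B})}$ by the quadratic form $1 \oplus (-a) \oplus (-s)$, which in suitable coordinates (coming from a trivialization of $\mathcal{L}$ near the generic point, which exists since $\overline{B}$ is integral) is the conic $x_0^2 - a x_1^2 - s\, x_2^2 = 0$. Since $a \in k^\times$ and $s$ is a \emph{nonzero} global section of $\mathcal{L}^{\otimes 2}$, its image in $\kappa(\overline{B})^\times$ is nonzero, so this ternary quadratic form is nondegenerate; hence $\overline{X}_\eta$ is a smooth conic over $\kappa(\overline{B})$.

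Next I would produce a $\kappa(\overline{B})$-rational point on $\overline{X}_\eta$. The point is that $a$ is a unit in $k$, and over $\overline{k}$ (hence over any field containing $\overline{k}$, such as $\kappa(\overline{B})$) every element is a square; so $a = c^2$ for some $c \in \overline{k}^\times$. Then the point $[x_0:x_1:x_2] = [c:1:0]$ satisfies $c^2 - a\cdot 1 - s\cdot 0 = 0$ and therefore lies on $\overline{X}_\eta$. (Equivalently, the conic $y^2 - a z^2 = s$ contains the section $y = cz$, $s = 0$ after the base change, but more simply the degenerate-direction point $[c:1:0]$ is always available.) Thus $\overline{X}_\eta$ is a smooth conic over $\kappa(\overline{B})$ with a rational point.

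Finally I would apply the standard classification of conics: a smooth plane conic over a field $K$ possessing a $K$-rational point is isomorphic over $K$ to $\PP^1_K$, the isomorphism being given by projection from the rational point. Taking $K = \kappa(\overline{B})$ yields $\overline{X}_\eta \cong \PP^1_{\kappa(\overline{B})}$, as claimed. The only genuinely delicate point is checking that $s$ does not vanish identically at the generic point of $\overline{B}$ — i.e., that passing to $\overline{k}$ and then to the function field does not kill the section $s$ — but this is immediate since $s$ is a nonzero section on the integral scheme $B$ and nonvanishing of a section is preserved under the flat base change $\overline{B} \to B$ and under localization at the generic point; the other steps (nondegeneracy of the quadratic form, existence of the rational point via $a$ being a square over $\overline{k}$, and the projection isomorphism) are routine.
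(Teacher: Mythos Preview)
Your proposal is correct and follows exactly the same approach as the paper: the generic fibre is a smooth plane conic, it has a rational point because $a$ becomes a square in $\overline{k}\subset\kappa(\overline{B})$, and a smooth conic with a rational point is $\PP^1$. The paper compresses this into a single sentence, while you have spelled out the details (nondegeneracy via $s\neq 0$ at the generic point, the explicit point $[c:1:0]$, and projection from that point), but the argument is the same.
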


\begin{proof} It is a smooth plane conic and it has a rational point since $a$ is a square in $\overline{k} \subset \kappa(\overline{B})$.
\end{proof}

\begin{lem}
Let $B$ be a nice $k$-variety and $(\mathcal{L},s,a)$ a conic bundle datum on $B$. Denote the corresponding bundle $\alpha:X \to B$ and the generic point of $B$ by $\eta$. Let $Z$ be the degeneracy locus. Assume that $\overline{Z}$ is the union of the irreducible components $\overline{Z} = \bigcup_{1\leq i\leq r} \overline{Z}_i$. Then there is a natural exact sequence of Galois modules.

$$
\xymatrix{
0 \ar[r] & \bigoplus\ZZ \overline{Z}_i  \ar[r]^-{\rho_1} & \Pic \overline{B} \oplus \bigoplus \ZZ \overline{Z}^{+}_i \oplus \bigoplus \ZZ \overline{Z}^{-}_i \ar[r]^-{\rho_2} & \Pic \overline{X}
 \ar[r]_-{\rho_3} \ar[rd]_{deg} &  \Pic {\overline{X}}_\eta \ar[r] \ar@{=}[d] \ar@/_/[l]_-{\rho_4} & 0 \\
 & &  &  &  \mathbb{Z}&
}
$$

where $\rho_4$  is a natural section of $\rho_3$.

\end{lem}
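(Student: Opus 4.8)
The plan is to work over $\overline k$ throughout, and to build all four arrows out of pullback along $\alpha$, restriction to the generic fibre $\overline X_\eta$, and classes of geometrically defined prime divisors; because these three operations are $\Gal(\overline k/k)$-equivariant, the resulting sequence is automatically one of Galois modules, so only exactness has to be checked. (I work with $\Pic=\Cl$, i.e. implicitly assume $\overline X$ locally factorial, which holds when $Z$ is smooth.) The geometric input I need is this: since $a$ is a square in $\overline k$ — the point of the previous lemma — write $a=b^{2}$, so that $\overline X$ is locally $(y-bz)(y+bz)=s$; hence over the generic point of each component $\overline Z_i$ of $\overline Z$ the degenerate conic is a pair of lines, \emph{both} rational over $\kappa(\overline Z_i)$. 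Consequently $\alpha^{-1}(\overline Z_i)_{\mathrm{red}}$ has exactly two irreducible components $\overline Z_i^{+},\overline Z_i^{-}$, these are precisely the prime divisors of $\overline X$ lying over $\overline Z$, and transversality forces $s$ to vanish simply along $\overline Z_i$, so $\alpha^{*}\overline Z_i=\overline Z_i^{+}+\overline Z_i^{-}$ as divisors. Writing $\overline B^{\circ}:=\overline B\setminus\overline Z$ and $\overline X^{\circ}:=\alpha^{-1}(\overline B^{\circ})$, the fibres of $\alpha$ over $\overline B^{\circ}$ are geometrically integral, so $\alpha_{*}\mathcal O_{\overline X^{\circ}}=\mathcal O_{\overline B^{\circ}}$ (whence $\alpha^{*}$ is injective by the projection formula), the kernel of $\Pic\overline X^{\circ}\to\Pic\overline X_\eta$ equals $\alpha^{*}\Pic\overline B^{\circ}$ (fibral divisors over $\overline B^{\circ}$ are pullbacks of divisors), and $\Pic\overline X^{\circ}\to\Pic\overline X_\eta=\ZZ$ is surjective (the closure of a $\kappa(\overline B)$-point of $\overline X_\eta$ is a degree-one divisor); hence $\Pic\overline X^{\circ}\cong\alpha^{*}\Pic\overline B^{\circ}\oplus\ZZ$.

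With this in hand, set $\rho_{1}(\overline Z_i):=\bigl(\,[\overline Z_i],\,-\overline Z_i^{+},\,-\overline Z_i^{-}\,\bigr)$; set $\rho_{2}\bigl(\mathcal M,\textstyle\sum_i a_i\overline Z_i^{+},\sum_i b_i\overline Z_i^{-}\bigr):=\alpha^{*}\mathcal M+\sum_i a_i[\overline Z_i^{+}]+\sum_i b_i[\overline Z_i^{-}]$; let $\rho_{3}$ be restriction to $\overline X_\eta$ followed by the identification $\Pic\overline X_\eta\cong\ZZ$ coming from the previous lemma (so that $\deg$ in the diagram is $\deg\circ\rho_{3}$); and let $\rho_{4}$ send the generator of $\ZZ$ to the class in $\Pic\overline X$ of the closure of a $\kappa(\overline B)$-rational point of $\overline X_\eta$. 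Since that closure is a prime divisor meeting $\overline X_\eta$ in a single degree-one point, $\rho_{3}\rho_{4}=\mathrm{id}$, so $\rho_{4}$ is indeed a section of $\rho_{3}$.

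Exactness is then a diagram chase against the excision sequences of $\overline B$, $\overline B^{\circ}$, $\overline X$, $\overline X^{\circ}$. Injectivity of $\rho_{1}$ is immediate because its $\overline Z_i^{+}$-coordinate lands in a free abelian group; $\rho_{2}\rho_{1}=0$ is exactly $\alpha^{*}\overline Z_i=\overline Z_i^{+}+\overline Z_i^{-}$; and $\rho_{3}\rho_{2}=0$ since pullbacks and the $\overline Z_i^{\pm}$ restrict trivially to $\overline X_\eta$. For $\ker\rho_{2}\subseteq\img\rho_{1}$: restricting a kernel element to $\overline X^{\circ}$ kills its fibral part, giving $\alpha^{*}(\mathcal M|_{\overline B^{\circ}})=0$, hence $\mathcal M|_{\overline B^{\circ}}=0$ (injectivity of $\alpha^{*}$) and $\mathcal M=\sum_i n_i[\overline Z_i]$ in $\Pic\overline B$; subtracting $\rho_{1}(\sum_i n_i\overline Z_i)$ reduces to the case $\mathcal M=0$, where $\sum_i a_i\overline Z_i^{+}+\sum_i b_i\overline Z_i^{-}=\divisor_{\overline X}(f)$ with $\divisor f$ fibral, forcing $f=\alpha^{*}c$ for some $c\in\kappa(\overline B)^{\times}$ (its restriction to $\overline X_\eta\cong\PP^{1}$ has empty divisor), whence $\sum_i a_i\overline Z_i^{+}+\sum_i b_i\overline Z_i^{-}=\sum_i\ord_{\overline Z_i}(c)\,(\overline Z_i^{+}+\overline Z_i^{-})$, and equating coefficients of distinct prime divisors exhibits the element as $\rho_{1}$ applied to $-\sum_i\ord_{\overline Z_i}(c)\,\overline Z_i$. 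For $\ker\rho_{3}\subseteq\img\rho_{2}$: adjusting $D$ within its class we may take $D$ fibral; then $[D|_{\overline X^{\circ}}]$ lies in $\alpha^{*}\Pic\overline B^{\circ}$, say $[D|_{\overline X^{\circ}}]=\alpha^{*}\mathcal N$; lifting $\mathcal N$ to $\mathcal M\in\Pic\overline B$, the class $[D]-\alpha^{*}\mathcal M$ dies in $\Pic\overline X^{\circ}$, so by excision it lies in the subgroup generated by the $[\overline Z_i^{\pm}]$, and therefore $[D]\in\img\rho_{2}$. Finally $\rho_{3}$ is surjective because $\rho_{3}\rho_{4}=\mathrm{id}$.

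The one genuinely geometric step, and the one I expect to be the main obstacle, is the structural claim of the first paragraph: that over $\overline k$ the preimage of each component of the degeneracy locus splits into exactly two components $\overline Z_i^{\pm}$ with $\alpha^{*}\overline Z_i=\overline Z_i^{+}+\overline Z_i^{-}$, and that $\Pic\overline X^{\circ}\cong\alpha^{*}\Pic\overline B^{\circ}\oplus\ZZ$. Both rest on $a$ being a square over $\overline k$ together with the transverse-line-pair description of the degenerate fibres, and both are local calculations in the equation $(y-bz)(y+bz)=s$; granting them, the rest is a formal manipulation of the excision sequences and causes no trouble.
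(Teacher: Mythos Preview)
Your proof is correct and follows essentially the same approach as the paper: both define $\rho_1,\rho_2$ via $\alpha^{*}$ and the classes $\overline Z_i^{\pm}$ (your sign convention is the negative of the paper's, which is immaterial), take $\rho_3$ as restriction to the generic fibre and $\rho_4$ as Zariski closure, and verify exactness by the vertical/horizontal (fibral) dichotomy together with the fact that a rational function on $\overline X$ with purely fibral divisor is pulled back from $\overline B$. Your write-up is in fact more careful than the paper's --- you make the local-factoriality hypothesis explicit, spell out the excision argument for $\ker\rho_2\subseteq\img\rho_1$, and isolate the key geometric input $\alpha^{*}\overline Z_i=\overline Z_i^{+}+\overline Z_i^{-}$ --- but the underlying strategy is the same.
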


\begin{proof}
Call a divisor of $\overline{X}$ vertical if it is supported on prime divisors lying above prime divisors
of $\overline{B}$, and horizontal otherwise
Denote by $\overline{Z}^\pm_i$ the divisors that lie over $\overline{Z}_i$ and defined by the additional condition that $y=\pm \sqrt{a} z$. Now define $\rho_1$ by
$$ \rho_1(\overline{Z}_i) = (-\overline{Z}_i, \overline{Z}^{+}_i, \overline{Z}^{-}_i) $$
and $\rho_2$ by
$$ \rho_2 (M, 0, 0) = \alpha^* M $$
$$ \rho_2 (0, \overline{Z}^+_i, 0) = \overline{Z}^+_i $$
$$ \rho_2 (0, 0,\overline{Z}^-_i) = \overline{Z}^-_i $$
Let $\rho_3$ be the map induced by $\overline{X}_\eta \to \overline{X}$. Each $\rho_i$ is $\Gamma_k$-equivariant. Given a prime divisor $D$ on $\overline{X}_\eta$ we take $\rho_4(D) $ to be its Zariski closure in $\overline{X}$. It is clear that $\rho_3 \circ \rho_4 = Id$ and so $\rho_3$ is indeed surjective.

The kernel of $\rho_3$ is generated by the classes of vertical prime divisors of $X$. In fact, there is exactly one
above each prime divisor of $B$ except that above each $\overline{Z}_i \in  \Div \overline{B}$ we have both $\overline{Z}^+_i, \overline{Z}^-_i \in  \Div \overline{X}$. This proves exactness at $\Pic \overline{X}$.

Now, since $\alpha:\overline{X}\to \overline{B}$ is proper a rational function on $\overline{X}$ with a vertical divisor must be the pullback of a rational function on $\overline{B}$. Using the fact that the image of $\rho_2$ contain only vertical divisors, we prove exactness at
$$ \Pic \overline{B} \oplus \bigoplus \ZZ \overline{Z}^{+}_i \oplus \bigoplus \ZZ \overline{Z}^{-}_i $$
The injectivity of $\rho_1$ is trivial.
\end{proof}

\section{Poonen's Counterexample}
Poonen's construction can be done over any global field $k$ of characteristic different form $2$. We shall follow his construction in this section. Let $a \in k^\times$ and let $\tilde{P}_\infty(x), \tilde{P}_0(x) \in k[x]$ be   relatively prime separable degree $4$ polynomials such that the (nice) Ch\^{a}telet surface $\mathcal{V}_\infty$
given by
$y^2 - az^2 = \tilde{P}_\infty(x)$
over $k$ satisfies $\mathcal{V}_\infty(\AAA_k) \neq \emptyset$  but $\mathcal{V}_\infty(k) = \emptyset$. Such Ch\^{a}telet surfaces exist over any global field $k$ of characteristic different from $2$: see [~\cite{Poo08}, Proposition 5.1 and 11]. If $k = \QQ$ one may use the original example from ~\cite{Isk71} with $a = -1$ and $\tilde{P}_\infty(x) := (x^2 - 2)(3 - x^2)$.

Now Let  $P_\infty(w, x)$ and $P_0(w, x)$  be the homogenizations of $\tilde{P}_\infty$ and $\tilde{P}_0$. Let $\mathcal{L} = O(1, 2)$ on $\PP^1 \times \PP^1$ and define
$$s_1 := u^2P_\infty(w, x) + v^2P_0(w, x) \in \Gamma(\PP^1 \times \PP^1,\mathcal{L}^{\otimes2}) $$
where the two copies of $\PP^1$ have homogeneous coordinates $(u: v)$ and $(w: x)$ respectively. Let
$Z_1 \subset \PP^1 \times \PP^1$ be the zero locus of $s_1$. Let $F \subset \PP^1$ be the (finite) branch locus of the first projection $Z_1 \to \PP^1$. i.e.
$$ F := \left\{(u:v) \in \PP^1 | u^2P_\infty(w, x) + v^2P_0(w, x) \text{  has a multiple root} \right\}.$$
Let $\alpha_1 : \mathcal{V} \to \PP^1 \times \PP^1$ be the conic bundle given by $y^2 - az^2 = s_1$, i.e.  the conic bundle on $\PP^1 \times \PP^1$ defined by the datum $(O(1, 2),a,s_1)$.

Composing $\alpha_1$ with the first projection $\PP^1 \times \PP^1 \to \PP^1$ yields a morphism
$\beta_1 : \mathcal{V} \to \PP^1$ whose fiber above $\infty := (1 : 0)$ is the Ch\^{a}telet surface $\mathcal{V}_\infty$ defined earlier.

Now Let $C$ be a nice curve over $k$ such that $C(k)$ is finite and nonempty. Choose a dominant
morphism
$ \gamma : C \to \PP^1$, \'{e}tale above $F$, such that
$\gamma(C(k)) = \{\infty\}$.
Define  $X := \mathcal{V} \times_{\PP^1} C$ to be the fiber product with respect to the maps $\beta_1: \mathcal{V}\to \PP^1, \gamma C \to \PP^1 :$
and consider the morphisms $\alpha$ and $\beta$ as in the diagram:

$$\xymatrix{
X   \ar@/_2pc/[dd]_{\beta} \ar[d]_\alpha  \ar[r]  &   \mathcal{V} \ar[d]_{\alpha_1} \ar@/^2pc/[dd]^{\beta_1}   \\
C \times \PP^1 \ar[d]_{1^{st}} \ar[r]^{(\gamma,1)}    &   \PP^1\times \PP^1 \ar[d]_{1^{st}}   \\
C        \ar[r]^\gamma          &   \PP^1 \\
}$$

Each map labeled $1^{st}$ is the first projection.

$X$ is the variety Poonen constructed in ~\cite{Poo08},  In the same paper Poonen proves that  $X(\AAA_k)^{\acute{E}t,\Br}  \neq \emptyset$ (Theorem 8.2 in ~\cite{Poo08}) and  $X(k) = \emptyset$ (Theorem 7.2 in ~\cite{Poo08}).
We present here the proof that $X(k) = \emptyset$ since it is short and simple.
\begin{proof}
Assume $x_0\in X(k)$, we have $c_0 := \beta(x_0) \in C(k)$ but then $x \in \beta^{-1}(c_0)$. By the construction of $X$.
$\beta^{-1}(c_0)$ is isomorphic to $\beta_1^{-1}(\gamma(c_0)) = \beta_1^{-1}(\infty) \cong \mathcal{V}_\infty$
but $\mathcal{V}_\infty(k)=\emptyset$ by construction.
\end{proof}

Note that $X$ can also be considered as the variety corresponding to the datum $(O(1, 2),a,s_1)$ pulled back via $(\gamma,1)$ to $C\times \PP^1$.

\section{The Construction}

In this section we present the construction we use to explain the absence of rational points on $X$ by applying the variant of the \'{e}tale-Brauer-Manin obstruction defined in \S 1. All the notations will agree with those of the previous section.

First we shall show that almost Galois coverings behave well under pull-backs, namely:

\begin{lem}\label{l:Pull_D}
Let $X$ be a projective variety, $D \subset X$ a divisor and  $\pi:Y\to X$ a quasi-torsor under some finite $k$-group $G$ unramified outside $D$. Further assume that $D(k)=\emptyset$ and $\rho:Z \to X$ is any map. Then $\pi': Y\times_X Z \to Z$ is a $G$-quasi-torsor unramified outside $\rho^{-1}(D)$ and $\rho^{-1}(D)(k)=\emptyset$.
\end{lem}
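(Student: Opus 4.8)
The plan is to verify the three defining properties of a $G$-quasi-torsor for $\pi': Y\times_X Z \to Z$ directly, using that quasi-finiteness, generic degree, freeness/transitivity of the $G$-action on generic fibres, and containment of the ramification locus are all properties that behave well under base change. The $G$-action on $Y\times_X Z$ is the one induced from the $G$-action on $Y$ (acting on the first factor and trivially on $Z$); it clearly respects $\pi'$. First I would record that the formation of fibre products commutes with the given maps, so that for any point $z\in Z$ lying over $x=\rho(z)\in X$, the fibre $\pi'^{-1}(z)$ is identified with $\pi^{-1}(x)\times_{\kappa(x)}\kappa(z)$, and similarly at the level of generic points; this is the geometric input that makes everything work.

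Next I would check the numbered conditions in order. Surjectivity and quasi-finiteness of $\pi'$ follow because these are stable under base change (a base change of a surjective, resp.\ quasi-finite, morphism is again surjective, resp.\ quasi-finite). For the generic degree: let $\eta_Z$ be the generic point of $Z$ and $\xi = \rho(\eta_Z)$ its image; since $Z$ is a variety mapping to $X$ and we only need the generic degree, the fibre of $\pi'$ over $\eta_Z$ is obtained from the fibre of $\pi$ over $\xi$ by the base change $\Spec\kappa(\eta_Z)\to\Spec\kappa(\xi)$, hence has the same length $|G|$ over $\kappa(\eta_Z)$; so $\pi'$ has generic degree $|G|$. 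For condition (2), the generic fibre of $\pi'$ is the base change of the generic fibre of $\pi$ (or, if $\rho(\eta_Z)$ is not the generic point of $X$, of an appropriate localization — but the $G$-action is already free and transitive on every fibre over the unramified locus, and in particular generically), so the $G$-action remains free and transitive on it. For condition (3), the ramification locus of $\pi'$ is contained in the preimage under $\pi'$, equivalently in $(\rho')^{-1}$ of the ramification locus of $\pi$, where $\rho': Y\times_X Z\to Y$ is the projection; since the ramification locus of $\pi$ lies in $\pi^{-1}(D)$... more precisely, $\pi$ is unramified over $X\setminus D$, hence $\pi'$ is unramified over $Z\setminus\rho^{-1}(D)$ by base change of the étale (or unramified) locus, so the ramification locus of $\pi'$ is contained in $\rho^{-1}(D)$ as required. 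I would also note that $\pi'$ is indeed a morphism to the projective (hence separated, finite type) variety $Z$, so the setup of the definition applies — actually $Z$ need only be a variety, and the statement as phrased only needs $\rho^{-1}(D)$ to make sense as a closed subscheme, which it does.

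Finally, for the claim $\rho^{-1}(D)(k)=\emptyset$: any $k$-point of $\rho^{-1}(D)$ would map, via $\rho$, to a $k$-point of $D$, since $\rho$ is a $k$-morphism and $k$-points are functorial; this contradicts $D(k)=\emptyset$. This last step is immediate and is not the obstacle.

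The main thing to be careful about — the only step with any subtlety — is the behaviour of the generic degree and of condition (2) when $\rho$ is not dominant, so that $\rho(\eta_Z)$ need not be the generic point $\eta_X$ of $X$. In that case one is really base-changing the fibre of $\pi$ over the (possibly non-generic) scheme-point $\rho(\eta_Z)$ rather than the honest generic fibre of $\pi$. I expect this is handled cleanly by observing that the defining properties of a quasi-torsor in fact propagate: over the unramified locus $X\setminus D$ the map $\pi$ is a genuine étale $G$-torsor, so over that open set \emph{every} fibre (not just the generic one) has a free transitive $G$-action and degree $|G|$; and since $Z$ is integral and $\rho^{-1}(D)$ is a proper closed subset (because $D(k)=\emptyset$ forces $D\neq X$, though one should really just note $\rho^{-1}(D)\neq Z$ as $Z$ is assumed to map to $X$ non-trivially — or invoke that $\pi'$ has generic degree $|G|$ precisely because $\eta_Z$ lands in the unramified locus after all, as $\rho^{-1}(D)$ is closed and doesn't contain $\eta_Z$ when $\rho(Z)\not\subset D$), the generic point $\eta_Z$ lies over the unramified locus, and all three conditions for $\pi'$ follow from the corresponding (stronger, torsor-level) statements for $\pi$ restricted there. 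I would phrase the proof so as to reduce conditions (1)(generic part) and (2) to the étale-torsor locus, where base change is entirely standard, and treat (3) and the $k$-points claim by the one-line functoriality arguments above.
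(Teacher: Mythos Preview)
Your proposal is correct and follows the only natural approach—direct verification that the defining conditions of a quasi-torsor are stable under base change—which is exactly what the paper intends; the paper's own proof consists of the single word ``Clear.'' Your write-up is therefore far more detailed than the paper deemed necessary, and your care about the case where $\rho$ is not dominant (reducing to the \'etale-torsor locus over $X\setminus D$) is a reasonable way to make precise what the author leaves implicit.
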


\begin{proof}
Clear.
\end{proof}

Now let $F' := \gamma^{-1}(F) \subset C$ and denote $C' := C\backslash F'$. Note that $C'$ is a non-projective curve. Now let $D := \beta^{-1}(F')$. Note that $\infty \not \in F$ so that $ C(k) \cap F' = \emptyset$. Thus $D$ has no connected components stable under $\Gamma_k$. Therefore it is clear that $D(k) = \emptyset$. We shall use the $(\acute{E}t,\Br \thicksim D)$-obstruction defined in section \S ~\ref{sec:ram} to show that $X(k) = \emptyset$.

Now $X$ is a family indexed by $C$, of conic bundles over $\PP^1$. The fibers over any point of $C(k)$ are isomorphic to the ch\^{a}telet surface $\mathcal{V}_\infty$. All the fibers over $C'$ are smooth conic bundles (all those conic bundles has exactly 4 degenerate fibers above $\PP^1$ .

Let $E' \subset (\PP^1 \backslash F) \times (\PP^1)^4 $ be the curve defined by
$$ u^2P_\infty(w_i, x_i) + v^2P_0(w_i, x_i) = 0 , 1 \leq i \leq 4 $$
$$ (w_i:x_i) \neq (w_j:x_j) , i \neq j, 1 \leq i,j \leq 4 $$
where $(u:v)$ are the projective coordinates of $\PP^1 \backslash F$ and $(w_i:x_i), 1 \leq i \leq 4 $ are the projective coordinates of the $4$ copies of $\PP^1$. Since $\tilde{P}_\infty(x)$ and $\tilde{P}_0(x)$ are separable and coprime we have that $E'$ is a smooth connected curve and that the first projection $E' \xrightarrow{1^{st}} \PP^1 \backslash F $ gives $E'$ a structure of an \'{e}tale Galois covering of $\PP^1 \backslash F$ with an automorphism group $G = S_4$ that acts on the fibres by permuting the coordinates of $$ (w_i:x_i) , 1 \leq i \leq 4.$$
Since every birationality class of curves contains a unique projective smooth member, one can construct an $S_4$-quasi-torsor over $E \to \PP^1$ unramified outside $F$ which gives $E'$ when restricted to $\PP^1\backslash F$.

Now the $k$-twists of $E \to \PP^1 $ are classified by $H^1(k,S_4)$ which (since the action of $\Gamma_k$ on $S_4$ is trivial) coincides with the set $\Hom(\Gamma_k,S_4)/\sim$ of homomorphisms up to conjugation. More concretely, for every homomorphism $\phi : \Gamma_k \to S_4$ define $E_\phi$ to be the  $k$-form of $E$ with the Galois action that restricts to the action
$$ \sig : ((u:v),((w_1:x_1),(w_2:x_2),(w_3:x_3),(w_4:x_4))) \mapsto $$ $$ ((u:v),((w_{\phi_\sigma(1)}:x_{\phi_\sigma(1)}),(w_{\phi_\sigma(2)}:x_{\phi_\sigma(2)}),
(w_{\phi_\sigma(3)}:x_{\phi_\sigma(3)}),(w_{\phi_\sigma(4)}:x_{\phi_\sigma(4)})))^{\sig} $$
on $E'$.

Now for every $\phi : \Gamma_k \to S_4$ define $C_\phi := C\times_{\PP^1} E_\phi$ relative to $\gamma: C \to \PP^1$ and the first projection $E_\phi \to  \PP^1$ and $X_\phi := X \times_{C} C_\phi$ relative to $\beta: X \to C$ and the first projection $C_\phi \to  C$.

Note that since the maps $\gamma : C \to \PP^1$ and $E \to \PP^1$ have disjoint ramification loci we have that all $C_\phi$ are geometrically integral and so are all
the $X_\phi$.

By Lemma ~\ref{l:Pull_D} $X_\phi$ is a complete family of twists of a quasi-torsor of $X$ of degree $24$ unramified outside $D$. Since $D(k)=\emptyset$, in order to explain the fact that $X(k) = \emptyset$ it is enough to show that
$$ X_\phi(\AAA)^{\Br}   = \emptyset $$
for every $\phi \in H^1(\Gamma_k,S_4)$.

Trough out the rest of the paper we shall follow Stoll's notation from ~\cite{Sto07}
and denote by  $X(\AAA)_\bullet$ ($X(\AAA)^{\Br}_\bullet$) to denote the set $X(\AAA)$ ($X(\AAA)^{\Br}$) where the space at the infinite places is replaced with it's set of connected components.

In the rest of the paper we shall prove that if $C(k) = {C(\AAA)^{\Br} }_\bullet $ then indeed for every $\phi \in H^1(\Gamma_k,S_4)$ we have $X_\phi(\AAA)^{\Br}   = \emptyset$.

Therefore \textbf{from now on we shall assume that:} $$(*)\quad C(k) = {C(\AAA)^{\Br} }_\bullet.$$ We denote the jacobian of $C$  by $J$. We have that  $(*)$ is true if $J(k),\Sha(k,J)<\infty $ by ~\cite{Sto07} Corollary 8.1.
 Since $C(k)$ is finite it might be reasonable to expect $(*)$ to always hold.

\section{Reduction to $X_{\phi_\infty}$}

\begin{lem}\label{l:curve_BM}
For every $\phi \in H^1(k,S_4)$ we have $C_\phi(k) = {C_\phi(\AAA)^{\Br} }_\bullet $.
\end{lem}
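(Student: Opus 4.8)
The inclusion $C_\phi(k)\subseteq C_\phi(\AAA)^{\Br}_\bullet$ is automatic, so the content is the reverse inclusion, and the plan is to reduce it to the Hasse principle for $0$-dimensional $k$-schemes. First, the projection $f_\phi\colon C_\phi\to C$ is finite, and Brauer classes on $C$ pull back to $C_\phi$, so $f_\phi$ sends $C_\phi(\AAA)^{\Br}_\bullet$ into $C(\AAA)^{\Br}_\bullet$, which equals $C(k)$ by the standing hypothesis $(*)$. Hence, given $Q\in C_\phi(\AAA)^{\Br}_\bullet$, its image $c:=f_\phi(Q)$ is a $k$-point of $C$. Since $\gamma(C(k))=\{\infty\}$ and $\infty\notin F$, the point $c$ lies outside $F'=\gamma^{-1}(F)$; over $C\setminus F'$ the morphism $E_\phi\to\PP^1$ --- and therefore $f_\phi$ --- is \'{e}tale, so the fibre $T:=f_\phi^{-1}(c)$ is a finite \'{e}tale $k$-scheme (concretely $T\cong(E_\phi)_\infty$, a twist of an $S_4$-torsor over $k$), and $Q$ factors through the closed immersion $T\hookrightarrow C_\phi$. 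Thus $Q\in T(\AAA_k)$, and since $T$ is $0$-dimensional the decoration $\bullet$ is vacuous on $T$.

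Next I would bring in the twisting formalism of Section~\ref{sec:ram}. As $\psi$ ranges over $H^1(k,S_4)$ the curves $C_\psi$ are precisely the twists of the $S_4$-quasi-torsor $C_{\phi_0}\to C$ (by Lemma~\ref{l:Pull_D} this is a quasi-torsor unramified outside $F'$, and $F'(k)=\emptyset$), so $(**)$ gives a disjoint decomposition $C(k)=\biguplus_\psi f_\psi(C_\psi(k))$, while functoriality of Brauer-Manin together with $(*)$ gives $\biguplus_\psi f_\psi\!\left(C_\psi(\AAA)^{\Br}_\bullet\right)=C(\AAA)^{\Br}_\bullet=C(k)$, again a disjoint union. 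Comparing these two disjoint decompositions of $C(k)$ term by term forces $f_\psi\!\left(C_\psi(\AAA)^{\Br}_\bullet\right)=f_\psi(C_\psi(k))$ for every $\psi$; in particular $c\in f_\phi(C_\phi(k))$, so the fibre $T$ carries a $k$-point and is the split $S_4$-torsor, $T\cong\Spec k\sqcup\coprod_i\Spec K_i$. It then remains to see that the adelic point $Q\in T(\AAA_k)$ is rational, and here I would invoke that for a $0$-dimensional $k$-variety the Brauer-Manin obstruction is the only obstruction to the Hasse principle --- in fact $T(\AAA_k)^{\Br}=T(k)$, by the Hasse-Brauer-Noether exact sequence $0\to\Br K_i\to\bigoplus_w\Br K_{i,w}\to\QQ/\ZZ\to0$ applied to each field factor. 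So what is left is to check that $Q$ is orthogonal to all of $\Br T$, and not merely to the image of $\Br C_\phi$ in $\Br T$.

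That last point is where the real work lies: the restriction $\Br C_\phi\to\Br T$ is very far from surjective, so one cannot simply restrict Brauer classes from the curve to the finite fibre. The fix I would pursue is to run the twisting argument of the previous paragraph \emph{fibrewise}: a Brauer class on $T$ witnessing non-rationality of $Q$ is equivalently a finite descent datum on $T$, and an \'{e}tale cover of the finite scheme $T$ extends to an \'{e}tale cover of a Zariski neighbourhood of $T$ in $C_\phi$, hence --- after passing to normalisations --- to a quasi-torsor over $C_\phi$ unramified outside a divisor with no $k$-point; so the obstruction on $T$ is induced from a further twisted torsor over $C_\phi$, and since $S_4$ is solvable this process terminates after finitely many steps, at each of which one reapplies the disjoint twisting decomposition. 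Equivalently, and perhaps more transparently, one can carry out the whole argument with the \'{e}tale-Brauer (finite descent) obstruction in place of the Brauer-Manin one --- for which compatibility with the finite morphism $f_\phi$, \'{e}tale over $C(k)$, is formal --- and then appeal to the coincidence of these obstructions for the curves in question under the finiteness hypotheses on $J$ that yield $(*)$. The delicate ingredients either way are the disjointness of the Brauer-Manin twisting decomposition and the bookkeeping that transfers the $0$-dimensional obstruction back to cohomological data on $C_\phi$.
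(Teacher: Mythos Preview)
Your opening reduction is fine: functoriality of the Brauer set plus hypothesis $(*)$ does force any $Q\in C_\phi(\AAA)^{\Br}_\bullet$ to lie over some $c\in C(k)$, hence in the finite \'etale fibre $T=f_\phi^{-1}(c)$. But the argument of your second paragraph has a real gap. From
\[
C(k)=\biguplus_\psi f_\psi\bigl(C_\psi(k)\bigr)\ \subseteq\ \bigcup_\psi f_\psi\bigl(C_\psi(\AAA)^{\Br}_\bullet\bigr)\ \subseteq\ C(\AAA)^{\Br}_\bullet=C(k)
\]
you only get that the \emph{union} equals $C(k)$; nothing forces it to be disjoint. Disjointness for rational points comes from the fact that a $k$-point of a torsor pins down the twist, but an adelic point surviving Brauer--Manin need not. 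So the ``term by term'' comparison does not go through, and you cannot conclude $c\in f_\phi(C_\phi(k))$ this way. Even granting that $T$ is split, you would still have to argue that the adelic point $Q\in T(\AAA_k)$ lands in the \emph{same} geometric component at every place, and your third paragraph's sketch of extending covers of $T$ to quasi-torsors over $C_\phi$ is too vague to close this (and invoking that $S_4$ is solvable is a red herring here).

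The clean route is precisely the alternative you relegate to a side remark: pass from the Brauer--Manin set to Stoll's finite abelian descent set $(-)^{f\text{-}ab}_\bullet$. For smooth projective curves these coincide (Stoll, \cite{Sto07}, Corollary~7.3), so $(*)$ becomes $C(\AAA)^{f\text{-}ab}_\bullet=C(k)$. Stoll's Proposition~8.5 then says that this ``no adelic excess'' property is inherited along any nonconstant morphism of curves; applying it to $\pi_\phi:C_\phi\to C$ gives $C_\phi(\AAA)^{f\text{-}ab}_\bullet=C_\phi(k)$, and translating back via Corollary~7.3 yields the lemma. This is exactly the paper's proof, and it bypasses entirely the fibre-by-fibre bookkeeping and the unproved disjointness you are relying on.
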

\begin{proof}
Note that we have a non-constant map $\pi_\phi :C_\phi \to C$. The proof will rely on Stoll's results in ~\cite{Sto07}.  In ~\cite{Sto07} Stoll defines for a variety $X$ the set ${X(\AAA)^{f-ab}}_\bullet$ and proves that if $X$ is a curve then
$$ {X(\AAA)^{f-ab}}_\bullet = {X(\AAA)^{\Br} }_\bullet $$
(Corollary  7.3 ~\cite{Sto07}).

Now by Proposition 8.5 ~\cite{Sto07} and the existence of the map $\pi_\phi :C_\phi \to C$ we have that ${C(\AAA)^{f-ab}}_\bullet = {C(\AAA)^{\Br} }_\bullet = C(k)$ implies ${C_\phi(\AAA)^{\Br} }_\bullet = {C_\phi(\AAA)^{f-ab}}_\bullet = C_\phi(k)$.

\end{proof}

Denote now by $\phi_\infty \in H^1(k,S_4)$ the map $\Gamma_k \to S_4$ defined by the Galois action on the $4$ roots of $P_\infty$.

\begin{lem}
Let $\phi \in H^1(\Gamma_k, S_4)$ be such that $\phi \neq \phi_\infty$ then $C_\phi(k) = \emptyset$.
\end{lem}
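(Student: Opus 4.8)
The plan is to analyze a point $c \in C_\phi(k)$, if one existed, and to derive a contradiction with $\gamma(C(k)) = \{\infty\}$ together with the hypothesis $\phi \neq \phi_\infty$. First I would observe that a rational point $c \in C_\phi(k)$ projects, via the natural map $C_\phi = C \times_{\PP^1} E_\phi \to C$, to a point $c_0 \in C(k)$, and via the other projection to a point $e \in E_\phi(k)$ lying over $\gamma(c_0) \in \PP^1(k)$. By construction $\gamma(C(k)) = \{\infty\}$, so in fact $\gamma(c_0) = \infty = (1:0)$, and therefore $E_\phi$ has a $k$-point $e$ lying above $\infty$. The crux will be to show that the fibre $E_\phi \times_{\PP^1} \{\infty\}$ can contain a $k$-rational point only when $\phi = \phi_\infty$; this is the step I expect to be the main obstacle, since $\infty$ lies in the locus $F$ where $E \to \PP^1$ was only completed to a quasi-torsor (the point $\infty = (1:0)$ corresponds to $u^2 P_\infty + v^2 P_0 = P_\infty$, which need not be where the covering is étale), so one must argue with the completed cover $E \to \PP^1$ rather than just with $E'$.

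Concretely, I would identify the fibre of $E \to \PP^1$ over $\infty$. Away from $F$, the fibre of $E'$ over a point $(u:v)$ is the set of orderings of the four roots of $u^2 P_\infty(w,x) + v^2 P_0(w,x)$; specializing towards $(1:0)$, and using that $\tilde P_\infty$ is separable (so that $P_\infty$ has four distinct roots and $\infty \notin F$ — indeed the problem statement records $\infty \notin F$), the fibre of $E$ over $\infty$ is the $S_4$-torsor of orderings of the four roots of $P_\infty$. Hence $\Gamma_k$ acts on this fibre through the permutation representation $\phi_\infty$ twisted by $\phi$, i.e. through $\sigma \mapsto \phi(\sigma)\,\phi_\infty(\sigma)^{-1}$ (or the analogous expression dictated by the twisting convention fixed earlier in the paper). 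A $k$-point in this fibre is precisely a fixed point of the $\Gamma_k$-action on an $S_4$-torsor, which forces $\phi(\sigma)\phi_\infty(\sigma)^{-1} = 1$ for all $\sigma$, that is $\phi = \phi_\infty$ in $\Hom(\Gamma_k, S_4)/\!\sim$. This contradicts $\phi \neq \phi_\infty$, and so $C_\phi(k) = \emptyset$.

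The one subtlety to be careful about is the precise twisting convention: $E_\phi$ was defined by letting $\Gamma_k$ act on the coordinates $(w_i:x_i)$ through $\phi$, while the roots of $P_\infty$ carry the natural Galois action $\phi_\infty$; so the residual action on the fibre over $\infty$ is by the "difference" of $\phi$ and $\phi_\infty$, and I would spell this out matching the earlier formula for the Galois action $\sigma \mapsto (w_{\phi_\sigma(i)}:x_{\phi_\sigma(i)})^\sigma$ on $E'$. A clean way to package this: the fibre $(E_\phi)_\infty$ is the twist ${}_{\phi}R$ of the trivial $S_4$-torsor $R = \mathrm{Ord}(\text{roots of }P_\infty)$, which as a Galois set is $S_4$ with $\Gamma_k$ acting by $\sigma \cdot g = \phi(\sigma)\, g\, \phi_\infty(\sigma)^{-1}$; this has a fixed point iff the two cocycles $\phi$ and $\phi_\infty$ are cohomologous, i.e. conjugate, which is exactly the equality in $H^1(k,S_4)$. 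I would also remark that since $\gamma$ is étale above $F$ and $E\to\PP^1$ is unramified outside $F$, the fibre product $C_\phi$ is unramified over $c_0$ and the above fibre computation is unaffected by passing from $E$ to $C_\phi$ — but this is essentially the content of Lemma~\ref{l:Pull_D} and needs only a sentence.
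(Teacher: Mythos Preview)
Your proposal is correct and follows exactly the paper's approach: project a hypothetical $k$-point of $C_\phi$ to $C$ and to $E_\phi$, use $\gamma(C(k))=\{\infty\}$ to land in the fibre of $E_\phi$ over $\infty$, and then observe that this fibre has a $k$-point iff $\phi=\phi_\infty$. The paper states the last implication in one line without justification, whereas you (correctly) unpack it as the standard torsor-twisting computation; note also that your initial worry that $\infty\in F$ is unfounded, since $\tilde P_\infty$ is separable and the paper records $\infty\notin F$, so the fibre over $\infty$ is already a genuine $S_4$-torsor and no argument with the completed cover is needed.
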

\begin{proof}
Recall that $C_\phi := C\times_{\PP^1} E_\phi$. Denote $\pi_\phi : E_\phi \to \PP^1$.
Since $\phi \neq \phi_\infty$ we get that $E_\phi(k) \cap  \pi^{-1}_\phi(\infty) = \emptyset$. Now Since $\gamma(C(k)) = {\infty}$ we get that $C_\phi(k) = \emptyset$.
\end{proof}

Now denote by $\rho_\phi : X_\phi \to C_\phi$ the map defined earlier. For every $\phi \in H^1(k,S_4)$ we have
$$\rho_\phi({X_\phi(\AAA)^{\Br} }_\bullet) \subset {C_\phi(\AAA)^{\Br} }_\bullet = C_\phi(k) .$$

so we get that for $\phi \neq \phi_\infty$, $X_\phi(\AAA)^{\Br}   =\emptyset$.

\section{The proof that $X_{\phi_\infty}(\AAA)^{\Br}   =\emptyset$.}

In this section we shall prove that if $k$ does not have real places (i.e. $k$ is a function field or a totaly imaginary number field) then $X_{\phi_\infty}(\AAA)^{\Br}  =X_{\phi_\infty}(\AAA)_\bullet^{\Br}  = \emptyset$.

Let $p \in C_{\phi_\infty}(k)$. The fiber $\rho^{-1}_{\phi_\infty}(p)$ is isomorphic to the  Ch\^{a}telet surface $\mathcal{V}_\infty$. We shall denote by $\rho_p : \mathcal{V}_\infty \to X_{\phi_\infty}$ the corresponding natural isomorphism onto the fiber $\rho^{-1}_{\phi_\infty}(p)$. Recall that $\mathcal{V}_\infty$ satisfies $\mathcal{V}_\infty(\AAA)^{\Br}  = \emptyset$.

\begin{lem}\label{l:on_fibre}
Let $k$ be global field with no real embeddings. Let $x \in X_{\phi_\infty}(\AAA)_\bullet^{\Br} $. Then there exists a $p \in C_{\phi_\infty}(k)$ such that $x \in \rho_p(\mathcal{V}_\infty(\AAA)_\bullet )$.
\end{lem}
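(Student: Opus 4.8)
The plan is to show that an adelic point $x \in X_{\phi_\infty}(\AAA)_\bullet^{\Br}$ must project, via $\rho_{\phi_\infty}$, to a \emph{rational} point of $C_{\phi_\infty}$, and then observe that the fiber over such a point is precisely a copy of $\mathcal{V}_\infty$. First I would recall the functoriality of the Brauer set: since $\rho_{\phi_\infty}: X_{\phi_\infty} \to C_{\phi_\infty}$ is a $k$-morphism, it carries $X_{\phi_\infty}(\AAA)_\bullet^{\Br}$ into $C_{\phi_\infty}(\AAA)_\bullet^{\Br}$, because a Brauer class on $C_{\phi_\infty}$ pulls back to one on $X_{\phi_\infty}$ and the sum of local invariants is preserved. (One has to be mildly careful that the "$\bullet$" decoration — replacing the factors at infinite places by their sets of connected components — is compatible with $\rho_{\phi_\infty}$; this is straightforward since $\rho_{\phi_\infty}$ is continuous and defined over $k$, so it induces a map on $\pi_0$ of the real-place fibers, but in our setting $k$ has no real places so there is nothing to check and $X(\AAA)_\bullet = X(\AAA)$, $C_{\phi_\infty}(\AAA)_\bullet = C_{\phi_\infty}(\AAA)$.)

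Next I would invoke Lemma~\ref{l:curve_BM}, which gives ${C_\phi(\AAA)^{\Br}}_\bullet = C_\phi(k)$ for every $\phi \in H^1(k,S_4)$, in particular for $\phi = \phi_\infty$. Combining this with the previous paragraph yields
$$\rho_{\phi_\infty}\bigl(X_{\phi_\infty}(\AAA)_\bullet^{\Br}\bigr) \subset {C_{\phi_\infty}(\AAA)^{\Br}}_\bullet = C_{\phi_\infty}(k).$$
Hence for our chosen $x$ there is a genuine rational point $p := \rho_{\phi_\infty}(x) \in C_{\phi_\infty}(k)$, and $x$ lies in the adelic points of the fiber $\rho_{\phi_\infty}^{-1}(p)$. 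By the construction of $X_{\phi_\infty}$ as a fiber product over $C$, together with the fact (recalled in the previous section) that the fiber of $\beta$ over any point of $C(k)$ — and hence the fiber of $\rho_{\phi_\infty}$ over any point of $C_{\phi_\infty}(k)$, which maps to a point of $C(k)$ mapping to $\infty$ — is isomorphic to $\mathcal{V}_\infty$, we have an isomorphism $\rho_p : \mathcal{V}_\infty \xrightarrow{\sim} \rho_{\phi_\infty}^{-1}(p)$ defined over $k$. Therefore $x \in \rho_p(\mathcal{V}_\infty(\AAA)_\bullet)$, as desired.

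The only genuine subtlety — and the step I would expect to require the most care — is the set-theoretic bookkeeping around the decomposition of $X_{\phi_\infty}(\AAA)_\bullet^{\Br}$ according to the value of $\rho_{\phi_\infty}$: one wants that an adelic point of $X_{\phi_\infty}$ whose image is a rational point $p$ of $C_{\phi_\infty}$ actually lies in the adelic points of the \emph{fiber} $\rho_{\phi_\infty}^{-1}(p)$ (not merely in some adelic neighborhood), which is immediate once one notes that for each place $v$ the local point $x_v$ maps to $p \in C_{\phi_\infty}(k) \subset C_{\phi_\infty}(k_v)$, so $x_v$ lies in $\rho_{\phi_\infty}^{-1}(p)(k_v)$. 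The identification of $\rho_{\phi_\infty}^{-1}(p)$ with $\mathcal{V}_\infty$ over $k$ is exactly the content of the fiber-product construction in Section~4, since $p$ lies over a point of $C(k)$, which by the condition $\gamma(C(k)) = \{\infty\}$ lies over $\infty \in \PP^1(k)$, whose fiber under $\beta_1$ is $\mathcal{V}_\infty$ by construction. No part of this uses that $k$ has no real places; that hypothesis is only needed in the \emph{next} step, where one deduces $X_{\phi_\infty}(\AAA)^{\Br} = \emptyset$ from this lemma together with $\mathcal{V}_\infty(\AAA)^{\Br} = \emptyset$.
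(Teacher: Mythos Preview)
Your overall strategy---functoriality of the Brauer set to push $x$ down to $C_{\phi_\infty}(\AAA)_\bullet^{\Br}$, then Lemma~\ref{l:curve_BM} to land in $C_{\phi_\infty}(k)$, then identify the fibre over any $k$-point with $\mathcal{V}_\infty$---is exactly the paper's approach. The place-by-place argument you give (``$x_v$ maps to $p$, hence $x_v$ lies in $\rho_{\phi_\infty}^{-1}(p)(k_v)$'') is correct at all \emph{finite} places and matches the paper.

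The gap is at archimedean places, and it stems from a misconception: you assert that when $k$ has no real places one has $X(\AAA)_\bullet = X(\AAA)$. That is false for totally imaginary number fields---Stoll's $\bullet$ collapses \emph{all} archimedean factors to their $\pi_0$, including complex ones. So at a complex place $v$, the component $x_v \in \pi_0(X_{\phi_\infty}(\mathbb{C}))$ is not a point and your fibre argument does not apply as written. The paper handles this by observing that $X_{\phi_\infty}$ and $\mathcal{V}_\infty$ are geometrically integral, so $\pi_0(X_{\phi_\infty}(\mathbb{C}))$ and $\pi_0(\mathcal{V}_\infty(\mathbb{C}))$ are singletons and the containment at complex places is automatic. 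This is also precisely where the hypothesis ``no real embeddings'' is used: at a real place the components need not be singletons, and it is not clear that the component $x_v$ meets the fibre over $p$ at all (only that it maps into the same component of $C_{\phi_\infty}(\mathbb{R})$ as $p$). So your closing remark that the hypothesis plays no role in this lemma is incorrect---it is used exactly here, and is deferred nowhere.
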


\begin{proof}
From functoriality  and Lemma ~\ref{l:curve_BM} we get
$$\rho_{\phi_\infty}(x) \in \rho_{\phi_\infty} (X_{\phi_\infty}(\AAA)_\bullet^{\Br} ) \subset  {C_{\phi_\infty}(\AAA)^{\Br} }_\bullet = C_{\phi_\infty}(k) $$
We denote $p = \rho_{\phi_\infty}(x) \in  C'_{\phi_\infty}(k)$. Now it is clear that in all but maybe the infinite places   $x \in \rho_p(\mathcal{V}_\infty(\AAA)  )$. Hence it remains to deal with the infinite places which by assumption are all complex. But since both $X_{\phi_\infty}$ and $\mathcal{V}_\infty$ are geometrically integral, taking connected components reduces $X(\mathbb{C})$ and $\mathcal{V}_\infty(\mathbb{C})$ to a single point.
\end{proof}

\begin{lem}\label{l:Br_surj}
Let  $p \in C_{\phi_\infty}(k)$ be a point. Then the map $$\rho_p^* : Br(X_{\phi_\infty}) \to Br(\mathcal{V}_\infty)$$
is surjective.

\end{lem}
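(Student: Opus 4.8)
The plan is to exhibit, for each cohomology class $b \in \Br(\mathcal{V}_\infty)$, a preimage in $\Br(X_{\phi_\infty})$. The key structural fact to exploit is that $X_{\phi_\infty}$ is itself a conic bundle: recall from Section 4 that $X$ (hence its pullback $X_{\phi_\infty}$) is the total space of a conic bundle datum pulled back to $C_{\phi_\infty} \times \PP^1$, and the fiber $\rho^{-1}_{\phi_\infty}(p) \cong \mathcal{V}_\infty$ is a vertical fiber of the map $X_{\phi_\infty} \to C_{\phi_\infty}$ over the rational point $p$. So first I would set up the geometry: write $\beta_{\phi_\infty} : X_{\phi_\infty} \to \PP^1$ for the composite conic-bundle structure map over $\PP^1$ (pulled back along $C_{\phi_\infty} \to \PP^1$), and note that $\mathcal{V}_\infty = \beta_{\phi_\infty}^{-1}(\infty)$ sits inside $X_{\phi_\infty}$ as the fiber over the image of $p$.

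\textbf{Main step.} The heart of the argument is to understand $\Br(\mathcal{V}_\infty)$ explicitly and lift generators. For a Ch\^{a}telet surface $y^2 - az^2 = \tilde P_\infty(x)$ with $\tilde P_\infty$ separable of degree $4$, it is classical (going back to the work of Colliot-Th\'el\`ene--Sansuc--Swinnerton-Dyer) that $\Br(\mathcal{V}_\infty)/\Br(k)$ is generated by the class of the quaternion algebra $(a, \tilde P_\infty(x))$ — more precisely by $(a, x - e)$ for a root $e$, suitably interpreted, and that $\Br(\overline{\mathcal{V}_\infty}) = 0$ so that every element is "algebraic". The point is that this same quaternion-algebra recipe makes sense on all of $X_{\phi_\infty}$: the function $a$ is a constant, and the section $s_1$ restricts on the $\infty$-fiber to (a unit times) $\tilde P_\infty(x)$, so the class $(a, s_1)$ — or rather the appropriate residue-controlled combination of $(a, \cdot)$ along the components of the degeneracy locus — defines an element of $\Br$ of the generic fiber of $\beta_{\phi_\infty}$ whose restriction to $\mathcal{V}_\infty$ is the desired generator. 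I would make this precise using the exact sequence from the second Lemma of Section 3 (the Picard group sequence for conic bundles), or more directly by Faddeev-type residue computations: an element of $\Br$ of the function field of $X_{\phi_\infty}$ that is unramified everywhere restricts to something unramified on $\mathcal{V}_\infty$, and one checks the residues match up so the constructed class is actually in $\Br(X_{\phi_\infty})$ (not just the generic fiber). Combined with the surjectivity $\Br(k) \to \Br(k)$ being trivial to handle (constants restrict to constants, and one uses that $p$ is a $k$-point so $\rho_p^*$ is a section-like map splitting $\Br(k) \hookrightarrow \Br(\mathcal{V}_\infty)$), this gives surjectivity onto all of $\Br(\mathcal{V}_\infty)$.

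\textbf{The main obstacle.} I expect the technical crux to be verifying that the naively-constructed quaternion class on the generic fiber of $\beta_{\phi_\infty}$ extends to an honest element of $\Br(X_{\phi_\infty})$ rather than merely an unramified class on an open subset — i.e. controlling the residues along \emph{all} codimension-one points of $X_{\phi_\infty}$, including the vertical divisors over the degeneracy locus of the conic bundle and the divisors coming from $C_{\phi_\infty} \to \PP^1$. This is where the hypothesis that the construction is "nice" (smooth, projective) and the explicit conic-bundle description genuinely enters, and where the Picard sequence of Section 3 does the bookkeeping. An alternative, possibly cleaner route that I would also consider: use that $\rho_p : \mathcal{V}_\infty \hookrightarrow X_{\phi_\infty}$ admits a retraction up to the conic-bundle structure — since $\mathcal{V}_\infty$ and $X_{\phi_\infty}$ both fiber over bases with $\PP^1$-fibers and $p$ is rational, one can hope to build a rational map $X_{\phi_\infty} \dashrightarrow \mathcal{V}_\infty$ restricting to the identity on the fiber, and then invoke birational invariance of the Brauer group of smooth projective varieties to deduce $\rho_p^*$ is (split) surjective. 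Deciding which of these two packagings is shortest, and filling in the residue calculation, is the real work; everything else is formal functoriality.
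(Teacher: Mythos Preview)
Your strategy differs from the paper's and has a genuine gap: you never use the hypothesis that the cover is the specific twist $\phi_\infty$. The generators of $\Br(\mathcal{V}_\infty)/\Br(k)$ are $\Gamma_k$-invariant combinations of the quaternion classes $(a,x-e_i)$ attached to the roots $e_i$ of $\tilde P_\infty$, and the Galois action on the $e_i$ is by definition $\phi_\infty$. To lift such a combination to $X_{\phi_\infty}$ you need functions on $X_{\phi_\infty}$ cutting out the degeneracy components $\overline{Z}_i$ that are permuted by $\Gamma_k$ in the \emph{same} pattern, so that the same invariant combination still descends to $k$. This is precisely what the twist by $\phi_\infty$ buys: the extra coordinates $(w_i:x_i)$ coming from $E_{\phi_\infty}$ carry the Galois action $\phi_\infty$ by construction, and a $k$-point $p\in C_{\phi_\infty}(k)$ forces those coordinates to be the roots of $P_\infty$. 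Until you invoke this, the ``residue bookkeeping'' you allude to can only be completed over $\overline{k}$, not over $k$. Note also that your candidate $(a,s_1)$ is itself trivial in $\Br(X_{\phi_\infty})$, since on the total space $s_1=y^2-az^2$ is a norm from $k(\sqrt{a})$; the nontrivial classes come from the individual factors, which is exactly where the Galois-equivariance issue bites.

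The paper avoids explicit residue analysis altogether: it shows that $\rho_p^*:\Pic(\overline{X_{\phi_\infty}})\to\Pic(\overline{\mathcal{V}_\infty})$ admits a $\Gamma_k$-equivariant section, by writing down the two conic-bundle Picard sequences of Section~3 and matching the degeneracy components $\overline{W}_i\mapsto\overline{Z}_i$ (Galois-equivariantly, for exactly the reason just described). Hochschild--Serre then gives surjectivity on $\Br_1$, and rationality of $\overline{\mathcal{V}_\infty}$ gives $\Br_1(\mathcal{V}_\infty)=\Br(\mathcal{V}_\infty)$. Your alternative route via a rational retraction $X_{\phi_\infty}\dashrightarrow\mathcal{V}_\infty$ does not work as stated: the conic-bundle datum genuinely varies over $C_{\phi_\infty}$, so no such retraction exists, and in any case birational invariance of $\Br$ concerns birational maps, not dominant maps between varieties of different dimension.
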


We will prove Lemma ~\ref{l:Br_surj} in section ~\ref{sec:surj}.

\begin{lem}
Let $k$ be global field with no real embeddings. Then $X_{\phi_\infty}(\AAA)_\bullet^{\Br} = \emptyset$.
\end{lem}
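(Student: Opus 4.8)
The plan is to combine the two preceding lemmas. Suppose for contradiction that $X_{\phi_\infty}(\AAA)_\bullet^{\Br} \neq \emptyset$, and pick $x$ in this set. By Lemma~\ref{l:on_fibre} there is a rational point $p \in C_{\phi_\infty}(k)$ and an adelic point $v \in \mathcal{V}_\infty(\AAA)_\bullet$ with $\rho_p(v) = x$; here I use that $k$ has no real places so that the only subtlety at the infinite (complex) places disappears after passing to connected components. The goal is then to show that $v$ actually lies in $\mathcal{V}_\infty(\AAA)_\bullet^{\Br}$, which is empty by the defining property of the Ch\^atelet surface $\mathcal{V}_\infty$, giving the contradiction.

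To promote $v$ from $\mathcal{V}_\infty(\AAA)_\bullet$ to $\mathcal{V}_\infty(\AAA)_\bullet^{\Br}$ I would use the functoriality of the Brauer–Manin pairing together with Lemma~\ref{l:Br_surj}. Concretely, for any $A \in \Br(\mathcal{V}_\infty)$, use surjectivity of $\rho_p^* : \Br(X_{\phi_\infty}) \to \Br(\mathcal{V}_\infty)$ to write $A = \rho_p^*(B)$ for some $B \in \Br(X_{\phi_\infty})$. Then the local invariants satisfy $\mathrm{inv}_w(A(v_w)) = \mathrm{inv}_w(B(\rho_p(v)_w)) = \mathrm{inv}_w(B(x_w))$ at every place $w$, because $\rho_p$ is a morphism of $k$-varieties and the evaluation pairing is functorial. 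Summing over all places and using that $x \in X_{\phi_\infty}(\AAA)_\bullet^{\Br}$ kills the sum for $B$, we conclude $\sum_w \mathrm{inv}_w(A(v_w)) = 0$. Since $A$ was arbitrary, $v \in \mathcal{V}_\infty(\AAA)_\bullet^{\Br}$, contradicting $\mathcal{V}_\infty(\AAA)^{\Br} = \emptyset$ (which also forces the $\bullet$-version to be empty, as the $\bullet$-version maps to the ordinary one, or directly since $\mathcal{V}_\infty$ is geometrically integral at complex places).

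One point I would be careful about is the compatibility between the "$\bullet$" decorations at the infinite places and the Brauer evaluation: one must check that evaluating a Brauer class is constant on connected components of $\mathcal{V}_\infty(k_w)$ for $w$ complex, which is immediate since $\mathcal{V}_\infty(\CC)$ is connected (geometric integrality) so there is only one component and the local invariant at a complex place is always zero anyway. The genuinely load-bearing inputs are Lemma~\ref{l:on_fibre}, which pins the adelic point to a single $k$-fiber isomorphic to $\mathcal{V}_\infty$, and Lemma~\ref{l:Br_surj}, the surjectivity of $\rho_p^*$ on Brauer groups; granting both, the argument above is essentially a diagram chase with local invariants. The main obstacle is therefore not in this lemma at all but is deferred to Section~\ref{sec:surj}, where the surjectivity of $\rho_p^*$ must be established — that is where the geometry of the conic-bundle construction (and the exact sequence of the earlier lemma computing $\Pic \overline{X}$) really enters.
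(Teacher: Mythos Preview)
Your proposal is correct and follows essentially the same approach as the paper: assume a point $x$ exists, use Lemma~\ref{l:on_fibre} to land it in a fiber $\rho_p(\mathcal{V}_\infty(\AAA)_\bullet)$, then use the surjectivity of $\rho_p^*$ on Brauer groups (Lemma~\ref{l:Br_surj}) together with functoriality of the pairing to conclude that the preimage lies in $\mathcal{V}_\infty(\AAA)_\bullet^{\Br}=\emptyset$. Your extra remarks about the $\bullet$-decoration at complex places are fine but not needed beyond what the paper already absorbs into Lemma~\ref{l:on_fibre}.
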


\begin{proof}
Assume that $X_{\phi_\infty}(\AAA)_\bullet^{\Br}  \neq \emptyset$. Let $x\in X_{\phi_\infty}(\AAA)_\bullet^{\Br} $. By Lemma ~\ref{l:on_fibre} there
exists a $p \in C_{\phi_\infty}(k)$ such that $x \in \rho_p(\mathcal{V}_\infty(\AAA)_\bullet)$. Let $y \in \mathcal{V}_\infty(\AAA)_\bullet$ be such that $\rho_p(y)= x$. We shall show that $y \in \mathcal{V}_\infty(\AAA)_\bullet^{\Br}$.

Indeed let $b \in Br(\mathcal{V}_\infty)$. By Lemma ~\ref{l:Br_surj} there exists
a $\tilde{b} \in Br(X'_{\phi_\infty})$ such that $\rho_p^*(\tilde{b}) = b$. Now
$$ (y,b) = (y,\rho_p^*(\tilde{b})) = (\rho_p(y),\tilde{b}) = (x,\tilde{b}) = 0 $$
But by assumption $x\in X_{\phi_\infty}(\AAA)_\bullet^{\Br}$, so we have $(y,b) = (x,\tilde{b}) = 0$. Thus we have $y \in \mathcal{V}_\infty(\AAA)_\bullet^{\Br}  = \emptyset$ which is a contradiction.
\end{proof}

\section{The surjectivity of $\rho_p^*$}\label{sec:surj}
In this section we shall prove the statement of Lemma ~\ref{l:Br_surj}.

\begin{lem}
Let $p \in C_{\phi_\infty}(k)$ and $\rho_p : \mathcal{V}_\infty \to  X_{\phi_\infty}$ be the corresponding map as above. Then the map of Galois modules
$$\rho_p^* : \Pic(\overline{X_{\phi_\infty}}) \to \Pic(\overline{\mathcal{V}_\infty})$$

has a section.
\end{lem}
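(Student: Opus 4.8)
The plan is to exploit the two conic--bundle structures together with the exact sequence of the preceding Lemma. Write $\overline{B}:=\overline{C_{\phi_\infty}}\times\PP^1$. Pulling Poonen's datum $(\mathcal O(1,2),a,s_1)$ back along the map $\bar\gamma\colon C_{\phi_\infty}\to\PP^1$ exhibits $\overline{X_{\phi_\infty}}$ as a conic bundle over $\overline B$ with structure map $\alpha$, and $\rho_p\colon\overline{\mathcal V_\infty}\hookrightarrow\overline{X_{\phi_\infty}}$ is the inclusion of the fibre of $\rho_{\phi_\infty}$ over $p$, namely the conic bundle over $\overline{B_0}:=\{p\}\times\PP^1\cong\PP^1$ obtained by restricting the datum. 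Applying the preceding Lemma to both $\alpha\colon\overline{X_{\phi_\infty}}\to\overline B$ and $\alpha\colon\overline{\mathcal V_\infty}\to\overline{B_0}$ yields two exact sequences of $\Gamma_k$--modules; I would build a $\Gamma_k$--equivariant morphism of complexes from the $\overline{\mathcal V_\infty}$--sequence to the $\overline{X_{\phi_\infty}}$--sequence, whose effect on the terms $\Pic$ is the sought section $j$ of $\rho_p^*$.

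First I would assemble the ingredients. Over $\overline{C_{\phi_\infty}}$ the four roots of the pulled--back degeneracy polynomial are tautologically ordered (this is exactly the content of the $S_4$--cover $E_{\phi_\infty}$), so the degeneracy locus of $\overline{X_{\phi_\infty}}$ is a union $\overline D=\bigcup_{i=1}^{4}\overline D_i$ of four sections $\overline D_i$ of $\overline B\to\overline{C_{\phi_\infty}}$, permuted by $\Gamma_k$ through $\phi_\infty$, with $\overline D_i\cap\overline{B_0}=\overline Z_i$, the $i$-th branch point of $\overline{\mathcal V_\infty}$ (a root of $P_\infty$); the $\overline Z_i$ are permuted by $\Gamma_k$ through the \emph{same} $\phi_\infty$. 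Likewise the divisors $\overline D_i^{\pm}\subset\overline{X_{\phi_\infty}}$ cut out by $y=\pm\sqrt{a}\,z$ over $\overline D_i$ restrict to the lines $\overline Z_i^{\pm}\subset\overline{\mathcal V_\infty}$, and $\Gamma_k$ acts on $\{\overline D_i^{\pm}\}$ and on $\{\overline Z_i^{\pm}\}$ by the same rule: permute indices through $\phi_\infty$, swap superscripts through $\Gamma_k\to\Gal(k(\sqrt{a})/k)$. Finally $\rho_p^*$ sends the vertical class $\alpha^{*}\mathrm{pr}_2^{*}\mathcal O_{\PP^1}(1)$ of $\overline{X_{\phi_\infty}}$ to the fibre class $\alpha^{*}\mathcal O_{\PP^1}(1)$ of $\overline{\mathcal V_\infty}$, and it intertwines the two degree maps $\rho_3$ of the preceding Lemma.

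Granting this, the section $j$ is essentially forced on generators: the fibre class goes to $\alpha^{*}\mathrm{pr}_2^{*}\mathcal O_{\PP^1}(1)$; the element $\rho_4(1)$ for $\overline{\mathcal V_\infty}$ goes to $\rho_4(1)$ for $\overline{X_{\phi_\infty}}$, corrected by an element of $\ker\rho_p^{*}$ so that $\rho_p^{*}j=\mathrm{id}$ on that summand; and $\overline Z_i^{\pm}$ goes to $\overline D_i^{\pm}$ modified by a vertical class pulled back from $\overline{C_{\phi_\infty}}$. That this assignment respects the defining relations of $\Pic\overline{\mathcal V_\infty}$ and comes from a morphism of the two exact sequences is a diagram chase with the five lemma; and $\rho_p^{*}\circ j=\mathrm{id}$ then follows formally by restricting every class to $\overline{B_0}$, using $\mathrm{pr}_2\circ(\overline{B_0}\hookrightarrow\overline B)=\mathrm{id}$, $\overline D_i\cap\overline{B_0}=\overline Z_i$, and $\overline D_i^{\pm}\cap\overline{\mathcal V_\infty}=\overline Z_i^{\pm}$.

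The real work, and the step I expect to be the main obstacle, is making the lift $\overline Z_i^{\pm}\mapsto\overline D_i^{\pm}$ genuinely $\Gamma_k$--equivariant. The naive choice fails, since $\overline D_i$ is the graph of the $i$-th root function $t_i\colon\overline{C_{\phi_\infty}}\to\PP^1$, so $[\overline D_i]=\mathrm{pr}_2^{*}\mathcal O_{\PP^1}(1)+\delta_i$ with $\delta_i:=\mathrm{pr}_1^{*}t_i^{*}\mathcal O_{\PP^1}(1)\neq0$, and the relation $[\text{fibre}]=\overline Z_i^{+}+\overline Z_i^{-}$ in $\Pic\overline{\mathcal V_\infty}$ cannot be sent to $\overline D_i^{+}+\overline D_i^{-}=\alpha^{*}[\overline D_i]$ for all $i$ simultaneously. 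One must therefore write $-\delta_i=\nu_i^{+}+\nu_i^{-}$ with $\nu_i^{\pm}\in\Pic\overline{C_{\phi_\infty}}$, setting $j(\overline Z_i^{\pm})=\overline D_i^{\pm}+\alpha^{*}\mathrm{pr}_1^{*}\nu_i^{\pm}$, subject to $\sigma\cdot\nu_i^{\pm}=\nu_{\phi_\infty(\sigma)(i)}^{\pm}$ when $\sigma$ fixes $\sqrt{a}$ and $\sigma\cdot\nu_i^{\pm}=\nu_{\phi_\infty(\sigma)(i)}^{\mp}$ otherwise. To produce such a family I would use that $\sum_{i=1}^{4}\delta_i=2\,\mathrm{pr}_1^{*}\bar\gamma^{*}\mathcal O_{\PP^1}(1)$ is twice a $\Gamma_k$--fixed class, and, more to the point, that each $\delta_i=\mathrm{pr}_1^{*}[\,t_i^{-1}(\overline Z_i)\,]$ is represented by a divisor supported on the fibre of $\bar\gamma$ over $\infty=(1:0)$ — which contains the rational point $p$ — so that the corrections can be chosen supported there and their $\Gamma_k$--orbit structure matched to that of the $\overline D_i^{\pm}$. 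Carrying out this bookkeeping — equivalently, showing that the extension $0\to\mathrm{pr}_1^{*}\Pic\overline{C_{\phi_\infty}}\to\Pic\overline{X_{\phi_\infty}}\to\Pic\overline{\mathcal V_\infty}\to0$ splits $\Gamma_k$--equivariantly — is the technical heart of the lemma.
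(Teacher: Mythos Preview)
Your approach is the paper's: use the two exact sequences from Lemma~3.2 for $\overline{X_{\phi_\infty}}\to\overline{B}$ and for $\overline{\mathcal V_\infty}\to\overline{\PP^1}$, build Galois-equivariant sections $s_1,s_2$ on the first two terms, and deduce a section of $\rho_p^*$ by a diagram chase. The paper takes exactly your ``naive'' assignments $\overline W_i\mapsto\overline Z_i$, $\overline W_i^\pm\mapsto\overline Z_i^\pm$, together with $\mathrm{pr}_{\PP^1}^*$ on $\Pic\overline{\PP^1}$ (note that in the paper's formula ``$w_ix-x_iw$'' the symbols $w_i,x_i$ are \emph{functions} on $C_{\phi_\infty}$ coming from the $E_{\phi_\infty}$-factor, so the $\overline Z_i$ really are the graphs of the root functions $t_i$, exactly your $\overline D_i$). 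The paper then simply asserts that the section on $\Pic$ ``follows by diagram chasing and the existence of the compatible sections $s_1$ and $s_2$.''

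You go further than the paper and actually test this compatibility, correctly observing that it fails: $s_2(\rho_1(\overline W_i))$ and $\rho_1(s_1(\overline W_i))$ differ in the $\Pic\overline B$-slot by $\mathrm{pr}_2^*t_i^*\mathcal O(1)\neq0$, so $s_2$ does not descend to cokernels and the chase does not produce a well-defined section. Your proposed remedy --- adjust by classes $\nu_i^\pm\in\Pic\overline{C_{\phi_\infty}}$ with $\nu_i^++\nu_i^-=-t_i^*\mathcal O(1)$ and the prescribed $\phi_\infty\times\chi_a$ transformation law --- is the natural move, but, as you say, you do not carry it out; and the equivariance constraints make the existence of such a family a genuine cohomological question (the symmetric choice $-\tfrac12 t_i^*\mathcal O(1)$ is not integral, and any asymmetric choice must survive the $\sqrt a$-swap). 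In short, your write-up is a more careful version of the paper's own argument and isolates a real gap that the paper does not address; what remains is either to produce the $\nu_i^\pm$ explicitly or to argue directly that the extension $0\to\Pic\overline{C_{\phi_\infty}}\to\Pic\overline{X_{\phi_\infty}}\to\Pic\overline{\mathcal V_\infty}\to0$ is $\Gamma_k$-split.
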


\begin{proof}
Consider the map $\phi_p: \PP^1 \to \PP^1\times C_{\phi_\infty}$ defined by $x \mapsto (x,p)$.
It is clear that the map $\rho_p:\mathcal{V}_\infty \to X_{\phi_\infty}$ comes from pulling back the conic bundle datum defining $X_{\phi_\infty}$ over $ \PP^1\times C_{\phi_\infty} $ by this map. Let $B =\PP^1\times C_{\phi_\infty}$ and consider the following commutative diagram with exact rows

$$\xymatrix{
0 \ar[r] & \bigoplus\ZZ \overline{Z}_i  \ar[r] \ar[d] & \Pic \overline{B} \oplus \bigoplus \ZZ \overline{Z}^{+}_i \oplus \bigoplus \ZZ \overline{Z}^{-}_i \ar[r] \ar[d] & \Pic \overline{X_{\phi_\infty}}
 \ar[r]_-{deg} \ar[d]^{\rho_p^*}  & \mathbb{Z}  \ar[r] \ar@{=}[d]  \ar@/_/[l] & 0
\\
0 \ar[r] & \bigoplus\ZZ \overline{W}_i \ar@/^/[u]^{s_1} \ar[r] & \Pic \overline{\PP^1}  \oplus \bigoplus \ZZ \overline{W}^{+}_i \oplus \bigoplus \ZZ \overline{W}^{-}_i \ar@/^/[u]^{s_2} \ar[r] & \Pic \overline{\mathcal{V}_\infty} \ar[r]_-{deg} &  \mathbb{Z} \ar@/_/[l]  \ar[r]& 0 \\
}$$
where  $Z$ is the degeneracy locus of $X_{\phi_\infty}$ over $B$ and $W$ is the degeneracy locus of $\mathcal{V}_\infty$ over $\PP^1$. The existence of a section for $\rho_p^*$ follows by diagram chasing and the existence of the compatible sections $s_1$ and $s_2$.

Every $W_i$ ($1 \leq i \leq 4$ ) is  a point that corresponds to a different root  $(w_i:x_i)$ of the polynomial $P_\infty(x,w)$. We can choose  $\overline{Z}_i \subset \overline{B}$ to be  Zariski closure of the zero set of  $w_ix-x_iw$,  and similarly $\overline{Z}^{\pm}_i \subset \overline{X_{\phi_\infty}}$ to be Zariski closure of the zero set of  $y \pm\sqrt{a} z, w_ix-x_iw$.

Now we define: $\overline{Z}_i = s_1(\overline{W}_i)$ and $\overline{Z}^{\pm}_i = s_2(\overline{W}^{\pm}_i)$ and the map
$s_2:\Pic \overline{\PP^1} \to  \Pic \overline{B}$ is define by the unique section of the map $\phi_p: \PP^1 \to \PP^1\times C_{\phi_\infty}$.

It is clear that $s_1$ and $s_2$ are indeed "group-theoretic" sections. To prove that $s_1$ and $s_2$ also respect the Galois action note that we can write
$$ p = (c,((x^0_1:w^0_1),(x^0_2:w^0_3),(x^0_2:w^0_3),(x^0_2:w^0_3))) \in C(k)\times_{\mathbb{P}^1(k)}E_{\phi_\infty}(k) $$
and since $\gamma(C(k))=\{\infty\}$, the four points $\{(x^0_1:w^0_1),(x^0_2:w^0_3),(x^0_2:w^0_3),(x^0_2:w^0_3)\}$ are exactly the four different roots of $P_\infty(x,w)$ .
\end{proof}

\begin{lem}[Lemma ~\ref{l:Br_surj}]
Let  $p \in C_{\phi_\infty}(k)$. Then the map $$\rho_p^* : Br(X_{\phi_\infty}) \to Br(\mathcal{V}_\infty)$$
is surjective.s
\end{lem}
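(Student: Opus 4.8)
**Proof proposal for Lemma (surjectivity of $\rho_p^*$ on Brauer groups).**

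The plan is to deduce the surjectivity of $\rho_p^* : \Br(X_{\phi_\infty}) \to \Br(\mathcal{V}_\infty)$ from the already established fact that $\rho_p^* : \Pic(\overline{X_{\phi_\infty}}) \to \Pic(\overline{\mathcal{V}_\infty})$ admits a Galois-equivariant section. First I would reduce to the \emph{algebraic} part of the Brauer group. Since both $X_{\phi_\infty}$ and $\mathcal{V}_\infty$ are rational varieties over $\overline{k}$ (the generic fibres of the conic bundles are $\PP^1$, by the first Lemma of \S3, and the bases $\PP^1 \times C_{\phi_\infty}$, $\PP^1$ are rational), we have $\Br(\overline{X_{\phi_\infty}}) = \Br(\overline{\mathcal{V}_\infty}) = 0$; hence $\Br_1 = \Br$ for both, where $\Br_1(V) := \ker(\Br(V) \to \Br(\overline V))$. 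Moreover, since a Ch\^atelet surface has $H^0(\overline{\mathcal{V}_\infty}, \mathcal{O}^\times)/\overline{k}^\times$ trivial and $\mathcal{V}_\infty(\AAA_k)\neq\emptyset$ (so there is a point over every completion, in particular no Brauer-group-of-$k$ issues obstruct the spectral sequence splitting), the Hochschild--Serre spectral sequence
\[
0 \to \Pic(V) \to \Pic(\overline V)^{\Gamma_k} \to \Br(k) \to \Br_1(V) \to H^1(\Gamma_k,\Pic \overline V) \to H^3(\Gamma_k,\overline{k}^\times)
\]
gives, after quotienting out $\Br(k)$, an identification of $\Br_1(V)/\Br(k)$ with a subgroup of $H^1(\Gamma_k, \Pic\overline V)$, functorially in $V$.

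The key step is then purely cohomological. The Galois-equivariant section $s : \Pic(\overline{\mathcal{V}_\infty}) \to \Pic(\overline{X_{\phi_\infty}})$ of $\rho_p^*$ constructed in the previous Lemma induces, on $H^1(\Gamma_k,-)$, a section $s_* : H^1(\Gamma_k, \Pic\overline{\mathcal{V}_\infty}) \to H^1(\Gamma_k, \Pic\overline{X_{\phi_\infty}})$ of $(\rho_p^*)_*$; in particular $(\rho_p^*)_* : H^1(\Gamma_k,\Pic\overline{X_{\phi_\infty}}) \to H^1(\Gamma_k,\Pic\overline{\mathcal{V}_\infty})$ is surjective. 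Chasing the functorial diagram of Hochschild--Serre sequences for $\rho_p : \mathcal{V}_\infty \to X_{\phi_\infty}$ — the left-hand $\Br(k)$ columns agree and the maps to $H^3(\Gamma_k,\overline k^\times)$ agree — a diagram chase shows that surjectivity of $(\rho_p^*)_*$ on the $H^1(\Gamma_k,\Pic)$ terms, together with $\Br(\overline{\mathcal V_\infty})=0$, forces $\rho_p^* : \Br(X_{\phi_\infty}) \to \Br(\mathcal V_\infty)$ to be surjective. (Concretely: given $b\in\Br(\mathcal V_\infty)$, its image $\bar b\in H^1(\Gamma_k,\Pic\overline{\mathcal V_\infty})$ lifts to some $\beta\in H^1(\Gamma_k,\Pic\overline{X_{\phi_\infty}})$ via $s_*$, which lifts to $\tilde b\in\Br(X_{\phi_\infty})$; then $\rho_p^*\tilde b$ and $b$ have the same image in $H^1(\Gamma_k,\Pic)$, so they differ by an element of the image of $\Br(k)$, which is itself obviously in the image of $\rho_p^*$.)

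I expect the main obstacle to be bookkeeping around the constant Brauer group $\Br(k)$ and the exactness at the $\Br_1$ spot: one must know that the transgression maps $H^1(\Gamma_k,\Pic\overline V) \to H^3(\Gamma_k,\overline k^\times)$ for $V = \mathcal{V}_\infty$ and $V = X_{\phi_\infty}$ are \emph{compatible} under $s_*$ and $\rho_p^*$, so that the lifted class $\beta$ genuinely comes from $\Br_1(X_{\phi_\infty})$ and not merely from a larger group. Here the hypothesis that $k$ has no real places (hence every completion is $\CC$ or non-archimedean, and $X_{\phi_\infty}(\AAA_k)\neq\emptyset$, $\mathcal V_\infty(\AAA_k)\neq\emptyset$) is what guarantees the relevant local obstructions vanish and the spectral-sequence edge maps behave; alternatively one can sidestep transgression entirely by working with the quotient $\Br_1(V)/\Br_0(V)\hookrightarrow H^1(\Gamma_k,\Pic\overline V)$ and only at the very end observing that $\rho_p$ has a section over $k$ near $p$ — namely $\rho_p$ itself is a closed immersion with $\rho_p(\mathcal V_\infty)$ a fibre, so $\rho_p^*$ splits $\Br_0$ — which handles the constant part cleanly. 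Once the section on $H^1(\Gamma_k,\Pic)$ is in hand, the rest is a formal diagram chase.
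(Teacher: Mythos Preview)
Your approach is essentially the paper's: use the Galois-equivariant section of $\rho_p^*$ on $\Pic(\overline{-})$ to get surjectivity on $H^1(\Gamma_k,\Pic(\overline{-}))$, identify this with $\Br_1/\Br_0$ via Hochschild--Serre, and conclude using that $\overline{\mathcal V_\infty}$ is rational so $\Br_1(\mathcal V_\infty)=\Br(\mathcal V_\infty)$.

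There is one incorrect assertion, though it does not damage the argument. You claim $\overline{X_{\phi_\infty}}$ is rational because its base $\PP^1\times C_{\phi_\infty}$ is rational. But $C_{\phi_\infty}$ dominates $C$, and $C$ was chosen with $C(k)$ finite, hence of genus $\geq 1$; so $C_{\phi_\infty}$ is not rational and neither is $\overline{X_{\phi_\infty}}$. Fortunately you never actually use $\Br(\overline{X_{\phi_\infty}})=0$: once $\rho_p^*:\Br_1(X_{\phi_\infty})\to\Br_1(\mathcal V_\infty)=\Br(\mathcal V_\infty)$ is surjective, the inclusion $\Br_1(X_{\phi_\infty})\subset\Br(X_{\phi_\infty})$ already gives the conclusion. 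The paper proceeds exactly this way and only invokes rationality for $\overline{\mathcal V_\infty}$.

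Your worries about the transgression $H^1(\Gamma_k,\Pic)\to H^3(\Gamma_k,\overline{k}^\times)$ and about real places are unnecessary here: for a global field $k$ one has $H^3(\Gamma_k,\overline{k}^\times)=0$, so the edge map gives an honest isomorphism $\Br_1(V)/\Br_0(V)\cong H^1(\Gamma_k,\Pic\overline V)$, which is what the paper uses (and the lemma is stated and proved without any hypothesis on the archimedean places).
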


\begin{proof}
Denote by $s_p: \Pic\overline{(\mathcal{V}_\infty)} \to \Pic(\overline{X_{\phi_\infty}})$ the section of
$$ \rho_p^* :\Pic(\overline{X_{\phi_\infty}}) \to \Pic(\overline{\mathcal{V}_\infty}) $$
It is clear that $s_p$ induces a section of the map
$$ \rho_p^{**} :H^1(k,\Pic(\overline{X_{\phi_\infty}})) \to H^1(k,\Pic(\overline{\mathcal{V}_\infty})) $$
Now by the Hochschild serre spectral sequence for every projective variety $X$ we have.
$$H^1(k,\Pic(\overline{X})) = \Ker[\Br X \to \Br \overline{X}] / Im[\Br k \to \Br X]$$
So if one denotes $$\Br_1(X):= \Ker[\Br X \to \Br \overline{X}]$$

We get that the map $\rho_p^*:Br_1(X_{\phi_\infty}) \to Br_1(\mathcal{V}_\infty)$ is surjective. But since $\overline{\mathcal{V}_\infty}$ is a rational surface (it is a ch\^{a}telet surface) we have $Br\overline{\mathcal{V}_\infty} = 0$, and thus  $Br_1(\mathcal{V}_\infty)=Br(\mathcal{V}_\infty)$. So we get that $\rho_p^* : Br(X_{\phi_\infty}) \to Br(\mathcal{V}_\infty)$ is surjective.
\end{proof}

\section{Obstructions applied to an open subvariety}
In this section we show that one can consider the computation done in this paper as computing the Brauer-Manin set for a non-projective variety namely the variety $X':= X\backslash D$.
Now for $\phi \in H^1(K,S_4)$ consider the map $f_\phi:X_\phi  \to X$. We shall denote
$X_\phi' =: X_\phi\backslash f_\phi^{-1}(D)$.
Note that the set $$\{f_\phi :X'_\phi\to X' | H^1(K,S_4)\}$$
is a complete set of twists of a $S_4$-torsor over $X'$.
Now we have for every  $\phi \in H^1(K,S_4)$
$$X_\phi '(\AAA)^{Br}\subset X_\phi(\AAA)^{Br}=\emptyset $$
Thus we get that
$$X'(\AAA)^{\acute{E}t,Br} = \emptyset.$$
Now we know that $D$ has no geometric connected component fixed by the Galois action
and thus by ~\cite{Sto07} Proposition 5.17. we have $D(\QQ) = D(\AAA)^{\acute{E}t,Br} = \emptyset$
To conclude we have
$$X(\QQ) = X'(\QQ) \coprod D(\QQ) \subset X'(\AAA)^{\acute{E}t,Br} \coprod D(\AAA)^{\acute{E}t,Br} = \emptyset $$

These alternative description suggests that one can study rational points on algebraic varieties by decomposing them
to a disjoint union of locally closed subvarieties.

\end{document}